\newtheorem*{fact}{Fact}
\newtheorem*{question}{Question}
\newtheorem{lemma}{Lemma}
\newtheorem*{class}{Classification Theorem}
\newtheorem*{3lemma}{Fixed Vertices Lemma}
\newtheorem*{12lemma}{Orientation Reversing Lemma}
\newtheorem*{0lemma}{Orientation Preserving Lemma}
\newtheorem*{rotation}{Rotations Lemma}
\newtheorem*{glide}{Glide Rotations Lemma}
\newtheorem*{reflection}{Reflections Lemma}
\newtheorem*{improper}{Improper Rotations Lemma}
\newtheorem*{smith}{Smith Theory}
\newtheorem*{autothm}{Automorphism Theorem}
\newtheorem*{eel}{Edge Embedding Lemma}
\def\Z{{\mathbb Z}}
\def\R{{\mathbb R}}
\def\g{{\gamma}}
\def\f{{\varphi}}
\def\fix{{\mathrm{fix}}}
\def\lcm{{\mathrm{lcm}}}
\begin{document}


\baselineskip=17pt


\title[Bipartite symmetries]{Symmetries of Embedded Complete Bipartite Graphs}
\author{Erica Flapan}
\address{Department of Mathematics, Pomona College, Claremont, CA 91711, USA}
\email{eflapan@pomona.edu}

\author{Nicole Lehle}
\address{Department of Mathematics, Pomona College, Claremont, CA 91711, USA}

\author{Blake Mellor}
\address{Department of Mathematics, Loyola Marymount University, Los Angeles, CA 90045, USA}
\email{blake.mellor@lmu.edu}

\author{Matt Pittluck}
\address{Department of Mathematics, Loyola Marymount University, Los Angeles, CA 90045, USA}

\author{Xan Vongsathorn}
\address{Department of Mathematics, Pomona College, Claremont, CA 91711, USA}

\date{}

\thanks{This research was supported in part by NSF grant DMS-0905687.}

\begin{abstract}

We characterize which automorphisms of an arbitrary complete bipartite graph $K_{n,m}$ can be induced by a homeomorphism of some embedding of the graph in $S^3$.

\end{abstract}

\subjclass{57M15, 57M25, 05C10}

\keywords{topological symmetry groups, spatial graphs, bipartite graphs}

\maketitle

\section{Introduction}
\label{intro}
Knowing the symmetries of a molecule helps to predict its chemical behavior.  Chemists use the {\it point group} as a way to represent the rigid symmetries of a molecule. However, molecules which are large enough to be flexible (such as long polymers) or have pieces which rotate separately may have symmetries which are not induced by rigid motions.  By modeling a molecule as a graph $\Gamma$ embedded in $S^3$, the rigid and non-rigid symmetries of the molecule can be represented by automorphisms of $\Gamma$ which are induced by homeomorphisms of the pair $(S^3, \Gamma)$. 
Different embeddings of the same abstract graph may have different automorphisms which are induced by homeomorphisms of the graph in $S^3$.  In fact, a given automorphism of an abstract graph may or may not be induced by a homeomorphism of some embedding of the graph.  In particular, it was shown in \cite{Fl} that a cyclic permutation of four vertices of the complete graph on six vertices $K_6$ cannot be induced by a homeomorphism of $S^3$, no matter how the graph is embedded in $S^3$.

In general, we are interested in which automorphisms of a graph can be induced by a homeomorphism of some embedding of the graph in $S^3$.  Flapan \cite{Flapan:1995} answered this question for the family of complete graphs $K_n$.  Now we do the same for the family of complete bipartite graphs $K_{n,m}$.  This is an interesting family of graphs to consider because it was shown in \cite{TSG1} that for every finite subgroup $G$ of $\mathrm{Diff}_+(S^3)$, there is an embedding $\Gamma$ of some complete bipartite graph such that the group of all automorphisms of $\Gamma$ which are induced by orientation preserving homeomorphisms of $S^3$ is isomorphic to $G$.  By contrast, it was shown in \cite{TSG2} that this is not the case for the complete graphs.

We prove the following Classification Theorem which determines precisely which automorphisms of a complete bipartite graph can be induced by a homeomorphism of $(S^3, \Gamma)$ for some embedding $\Gamma$ of the graph in $S^3$.  The theorem is divided into two parts, according to whether the homeomorphism inducing a particular automorphism is orientation preserving or reversing.  The automorphisms are described by their fixed vertices and their cycle structure.  Note that in this paper we use the term ``cycle" to refer to a cycle in the permutation on the vertices of a graph induced by an automorphism of the graph, not to a cycle in the graph in the usual graph-theoretic sense.
\medskip

\begin{class} 
Let $m, n>2$ and let $\varphi$ be an order $r$ automorphism of a complete bipartite graph $K_{n,m}$ with vertex sets $V$ and $W$.  There is an embedding $\Gamma$ of $K_{n,m}$ in $S^3$ with an orientation preserving homeomorphism $h$ of $(S^3,\Gamma)$ inducing $\varphi$ if and only if all vertices are in $r$-cycles except for the fixed vertices and exceptional cycles explicitly mentioned below (up to interchanging $V$ and $W$):

\begin{enumerate} 
\item There are no fixed vertices or exceptional cycles.

\item $V$ contains one or more fixed vertices.

\item $V$ and $W$ each contain at most 2 fixed vertices.

\item $j|r$ and $V$ contains some $j$-cycles.

\item $r=\mathrm{lcm}(j,k)$, and $V$ contains some $j$-cycles and $k$-cycles.

\item $r=\mathrm{lcm}(j,k)$, and $V$ contains some $j$-cycles and $W$ contains some $k$-cycles.

\item $V$ and $W$ each contain one 2-cycle.

\item $\frac{r}{2}$ is odd, $V$ and $W$ each contain one 2-cycle, and $V$ contains some $\frac{r}{2}$-cycles.

\item $\varphi(V)=W$ and $V\cup W$ contains one 4-cycle.

\end{enumerate}
\medskip

\noindent There is an embedding $\Gamma$ of $K_{n,m}$ in $S^3$ with an orientation reversing homeomorphism $h$ of $(S^3,\Gamma)$ inducing $\varphi$ if and only if $r$ is even and all vertices are in $r$-cycles except for the fixed vertices and exceptional cycles explicitly mentioned below (up to interchanging $V$ and $W$):

\begin{enumerate}\setcounter{enumi}{9}

\item  $\f(V) = V$ and there are no fixed vertices or exceptional cycles.

\item  $r=2$ and all vertices of $V$ and at most 2 vertices of $W$ are fixed.

\item  $V$ contains at most 2 fixed vertices, and one of the following is true.

\begin{enumerate}
\item $W$ contains one 2-cycle.

\item $V$ contains some 2-cycles.

\item  $V$ may contain some 2-cycles, $\frac{r}{2}$ is odd, and all vertices of $W$ are in $\frac{r}{2}$ cycles.

\item $W$ contains at most one 2-cycle, $\frac{r}{2}$ is odd, and all non-fixed vertices of $V$ are in $\frac{r}{2}$ cycles.
\end{enumerate}

\item $4 | r$, $\f(V)=W$ and $V\cup W$ contains at most two 2-cycles.

\end{enumerate}
\end{class}

\bigskip

We will begin by proving the necessity of the conditions in the Classification Theorem, and then provide constructions to show that each of the cases listed can actually occur.
\bigskip

\section{Necessity of the Conditions}

The following result allows us to focus our attention on {\em finite order} homeomorphisms of embeddings of our graphs in $S^3$.

\begin{autothm} \cite{Flapan:1995}
Let $\f$ be an automorphism of a 3-connected graph which is induced by a homeomorphism $f$ of $(S^3,\Gamma_1)$ for some embedding $\Gamma_1$ of the graph in $S^3$.  Then $\f$ is induced by a finite order homeomorphism $h$ of $(S^3,\Gamma_2)$ for some possibly different embedding $\Gamma_2$ of $\Gamma_1$ in $S^3$.  Furthermore, $h$ is orientation reversing if and only if $f$ is orientation reversing.
\end{autothm}

 If $n, m\leq 2$, then it is easy to see that all of the automorphisms of $K_{n,m}$ can be induced by a homeomorphism of some embedding of $K_{n,m}$ in $S^3$.  Thus for the remainder of the paper, we focus on $K_{n,m}$ with $n, m >2$.   In this case, $K_{n,m}$ is 3-connected, and hence we can apply the Automorphism Theorem.  It follows that any automorphism which is induced by a homeomorphism for some embedding of $K_{n,m}$ is induced by a finite order homeomorphism for some (possibly different) embedding of the graph.  Thus we begin with some results about finite order homeomorphisms of $S^3$.  The following is a special case of a well-known result of P. A. Smith.

\begin{smith} \cite{Smith:1939}
Let $h$ be a non-trivial finite order homeomorphism of $S^3$.  If $h$ is orientation preserving, then $\fix(h)$ is either the empty set or is homeomorphic to $S^1$.  If $h$ is orientation reversing, then $\fix(h)$ is homeomorphic to either $S^0$ or $S^2$.
\end{smith}
\medskip

\begin{lemma}
\label{topfact}
Let $h$ be a finite order homeomorphism of $S^3$ which is fixed point free.  Then there are at most two circles which are the fixed point set of some power of $h$ less than $r$.
\end{lemma}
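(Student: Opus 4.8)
The plan is to reduce the statement to an explicit computation with an orthogonal action. First observe that $h$ must be orientation preserving: if $h$ were orientation reversing (and nontrivial), then by Smith Theory $\fix(h)$ would be homeomorphic to $S^0$ or $S^2$, contradicting that $h$ is fixed point free. Hence every power $h^k$ is orientation preserving, so by Smith Theory each $\fix(h^k)$ is either empty or a circle. Let $r$ be the order of $h$. Since $\langle h^k\rangle = \langle h^{\gcd(k,r)}\rangle$ we have $\fix(h^k)=\fix(h^{\gcd(k,r)})$, so it suffices to bound the number of distinct circles among the sets $\fix(h^d)$ with $d\mid r$ and $d<r$. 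Two elementary facts organize these. First, if $d\mid e$ then $\fix(h^d)\subseteq\fix(h^e)$, and since a circle cannot properly contain a circle, any nonempty $\fix(h^d)$ with $d<r$ equals $\fix(h^{r/p})$ for some prime $p\mid r$ (choose a prime $p\mid r/d$, so that $d\mid r/p$). Second, the stabilizer in $\langle h\rangle$ of a point of $S^3$ is a subgroup, so $\fix(h^a)\cap\fix(h^b)=\fix(h^{\gcd(a,b,r)})$; consequently two distinct circles of this form are disjoint, since a nonempty circle contained in both would have to equal both. Thus it remains to show that at most two distinct circles occur among $C_p:=\fix(h^{r/p})$ as $p$ ranges over the primes dividing $r$.

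Suppose, for contradiction, that $C_{p_1},C_{p_2},C_{p_3}$ are three distinct (hence pairwise disjoint) such circles, for distinct primes $p_1,p_2,p_3\mid r$. Put $N=p_1p_2p_3$ and $g=h^{r/N}$, an orientation preserving homeomorphism of order exactly $N$. Since $\fix(g)\subseteq\bigcap_i\fix(g^{N/p_i})=\bigcap_i\fix(h^{r/p_i})=\bigcap_i C_{p_i}=\varnothing$, the map $g$ is fixed point free, and $\fix(g^{N/p_i})=C_{p_i}$ for each $i$. Replacing $h$ by $g$, we may therefore assume $r=p_1p_2p_3$ is a product of three distinct primes.

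Now I invoke the geometric fact that a finite cyclic group of homeomorphisms of $S^3$ is conjugate to a subgroup of $SO(4)$. After conjugation $h$ becomes the orthogonal map $(u,v)\mapsto\bigl(e^{2\pi i a_1/r}u,\ e^{2\pi i a_2/r}v\bigr)$ on $S^3\subseteq\C^2$, and being fixed point free forces $r\nmid a_1$ and $r\nmid a_2$. For $d\mid r$ with $d<r$, the set $\fix(h^d)$ equals $S^3\cap(\C\times 0)$ when $r\mid da_1$ (in which case $r\nmid da_2$), equals $S^3\cap(0\times\C)$ when $r\mid da_2$, and is empty otherwise. So at most two circles arise, contradicting the existence of three, and the lemma follows.

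The step I expect to be the main obstacle is the geometric input in the last paragraph: the action of $\langle h\rangle$ is \emph{not} free (the circles $C_p$ are nonempty fixed sets of powers), so one needs standardness of a possibly non-free cyclic action on $S^3$. This follows from the resolution of the Smith Conjecture together with the elliptization/orbifold geometrization theorem. If one prefers to stay within the Smith-theoretic framework, the alternative is to argue by induction on the number of prime factors of $r$: given a power of $h$ whose fixed set is a circle, the Smith Conjecture makes that power a standard rotation about an unknot, so the quotient of $S^3$ by it is again $S^3$ with an unknotted branch circle and a residual cyclic action of smaller order; one then checks that the number of ``exceptional'' circles does not grow under this reduction, with base case $r$ prime, where no power less than $r$ has a fixed circle at all. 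Either way, the real content is understanding how the invariant circles of the various powers sit relative to one another — that is where the argument is not purely formal.
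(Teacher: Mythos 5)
Your proof is correct and, at its core, takes the same route as the paper: conjugate $h$ to an isometry of $S^3$ (via the orbifold geometrization/Smith machinery, exactly the input the paper gets from Thurston's Orbifold Theorem), view it as an element of $SO(4)$ written as rotations in two perpendicular planes of $\R^4\cong\C^2$, and observe that only the two corresponding circles can be fixed by any power. The preliminary reductions (to prime-index powers, disjointness, and $r$ a product of three primes) are correct but not needed, since your final orthogonal computation already bounds the number of circles by two for arbitrary $r$.
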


\begin{proof}  Suppose that some power of $h$ has non-empty fixed point set.  Then by Thurston's Orbifold Theorem \cite{BLP}, $h$ is conjugate to an orientation preserving isometry $g$ of $S^3$.  Now $g$ can be extended to an orientation-preserving isometry $\hat{g}$ of $\R^4$ fixing the origin; i.e. an element of SO(4).  Every element of SO(4) is the composition of two rotations about perpendicular planes in $\R^4$ (see \cite{duv}, for example), and it is easy to show that these are the only planes fixed by any power of $\hat{g}$.  It follows that there are at most two circles in $S^3$ which are the fixed point set of some power of $g$.  Finally, since $h$ is conjugate to $g$, there are at most two circles which are the fixed point set of some power of $h$.  \end{proof}

\medskip

\begin{lemma}
\label{emptyorder}
Let $h$ be an order $r$ homeomorphism of $S^3$ which is fixed point free such that for some minimal $k < j<r$, $A=\fix(h^k)$ and $B=\fix(h^j)$ are distinct circles.  Then the following are true:
\begin{enumerate}
\item  $A \cap B = \emptyset$.

\item $h(A)=A$ and $h(B)=B$.

\item The points in $A$, $B$, and $S^3-(A \cup B)$ are in $k$, $j$, and $r$-cycles respectively.

\item $r= \lcm (k,j)$.
\end{enumerate}
\end{lemma}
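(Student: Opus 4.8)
The plan is to reduce to an explicit isometric model and then read off all four conclusions from elementary divisibility facts. Because $A=\fix(h^k)\neq\emptyset$, we are in the situation analysed in the proof of Lemma~\ref{topfact}: $h$ is conjugate to an orientation preserving isometry $g$ of $S^3$ whose extension $\hat g$ lies in $SO(4)$ and is a composition of rotations about two perpendicular planes $P_1,P_2$ of $\R^4$, and moreover the only circles arising as the fixed point set of a power of $h$ are the conjugates of $C_1:=P_1\cap S^3$ and $C_2:=P_2\cap S^3$. Identify $\R^4$ with $\C^2$ so that $P_1=\{z_2=0\}$ and $P_2=\{z_1=0\}$, and write $g(z_1,z_2)=(\zeta^a z_1,\zeta^b z_2)$ with $\zeta=e^{2\pi i/r}$; put $\a=r/\gcd(r,a)$ and $\b=r/\gcd(r,b)$. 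Since $g$ is fixed point free, $\a>1$ and $\b>1$; since $g^d=\mathrm{id}$ exactly when $\a\mid d$ and $\b\mid d$ and $g$ has order $r$, we get $\lcm(\a,\b)=r$. Every assertion of the lemma is invariant under conjugating $h$, so we may simply assume $h=g$.

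A one-line computation with $g^d(z_1,z_2)=(\zeta^{ad}z_1,\zeta^{bd}z_2)$ shows that $\fix(g^d)=C_1$ exactly when $\a\mid d$ and $\b\nmid d$, that $\fix(g^d)=C_2$ exactly when $\b\mid d$ and $\a\nmid d$, and that $\fix(g^d)$ is $S^3$ or $\emptyset$ otherwise. Since $A$ and $B$ are distinct circles, $\{A,B\}=\{C_1,C_2\}$; relabel so that $A=C_1$ and $B=C_2$. Then $\a\mid k$, $\b\nmid k$, $\b\mid j$, and $\a\nmid j$. Conclusions (1) and (2) are now immediate: $A\cap B=C_1\cap C_2=\emptyset$, and $h(A)=A$, $h(B)=B$ since $g$ preserves each plane $P_i$ (or, with no reference to the model, since $x\in A\Rightarrow h^k(h^{\pm 1}(x))=h^{\pm 1}(x)$).

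For conclusion (3) I would compute the orbit lengths directly in the model: the orbit of a point $(z_1,0)\in C_1$ has length equal to the least $d>0$ with $\zeta^{ad}=1$, namely $\a$; the orbit of a point $(0,z_2)\in C_2$ has length $\b$; and the orbit of a point with $z_1,z_2\neq 0$ has length the least $d$ with $\a\mid d$ and $\b\mid d$, namely $\lcm(\a,\b)=r$. It then only remains to identify $\a$ with $k$ and $\b$ with $j$, and here the minimality hypothesis is used: if $\a<k$, the same dichotomy gives $\fix(h^{\a})=C_1=A$, a circle, so the pair $(\a,j)$ contradicts the minimality of $(k,j)$; hence $\a=k$. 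Symmetrically $\fix(h^{\b})=C_2=B$, so comparing $(k,j)$ with the pair formed from $\b$ and $k$ (in the appropriate order) forces $\b=j$. Now conclusion (3) is exactly the orbit-length statement above, and conclusion (4) is $r=\lcm(\a,\b)=\lcm(k,j)$.

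The substance of the argument is all packed into the first paragraph, which is imported wholesale from Lemma~\ref{topfact}; the rest is bookkeeping. The one spot requiring care — what I expect to be the main obstacle — is the last step, checking that the minimality hypothesis genuinely pins $\a$ and $\b$ down to $k$ and $j$ and not merely to divisors of them, since without that identification conclusion (3) would be false.
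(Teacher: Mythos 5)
Your proof is correct, but it takes a genuinely different route from the paper's. You re-enter the explicit $SO(4)$ normal form $(z_1,z_2)\mapsto(\zeta^a z_1,\zeta^b z_2)$ from the proof of Lemma \ref{topfact} and read every conclusion off from the divisibility of $d$ by the orders $\alpha,\beta$ of the two rotation angles; the paper instead stays abstract, never returning to coordinates. Concretely, the paper proves (1) by noting that minimality of $k$ forces $k\mid j$, hence $A=\fix(h^k)\subseteq\fix(h^j)=B$, impossible for distinct circles; it gets (3) from minimality plus the two-circle bound of Lemma \ref{topfact}; and it gets (4) by observing that $h^{\lcm(k,j)}$ fixes the two disjoint circles $A\cup B$ and so, by Smith Theory, must be the identity. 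Your version buys a complete classification of $\fix(h^d)$ for every $d$ and makes the orbit lengths transparent, at the cost of the bookkeeping you correctly flag as the delicate point: the identification $\alpha=k$, $\beta=j$ (your argument there is sound --- $\alpha\mid k$ and $\beta\nmid\alpha$ give $\fix(h^\alpha)=A$, so minimality of $k$ forces $\alpha=k$, and symmetrically for $\beta=j$). The paper's argument is shorter and coordinate-free, but both ultimately rest on the same input, namely the conjugation to an isometry via the Orbifold Theorem together with Smith Theory.
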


\begin{proof}
Suppose, for the sake of contradiction, that there exists some $x \in A \cap B$.  Then $x$ is fixed by both $h^k$ and $h^j$. Since $k$ is the smallest power of $h$ with a nonempty fixed point set, we have that $k|j$, and so $A \subseteq B$.  But this is impossible since $A$ and $B$ are distinct circles. Thus condition (1) holds.  

Let $x\in A$, then $h^k(x)=x$.  Hence $h^k(h(x))=h(h^k(x))=h(x)$.  So $h(x)$ is fixed by $h^k$.  It follows that $h(x)\in A$.  Thus condition (2) holds.

Observe that $k$ is the smallest power of $h$ that fixes any point of $S^3$.  Thus all of the points in $A$ have order $k$ under $h$.  Similarly, $j$ is the smallest power of $h$ that fixes any point of $S^3 - A$, and hence of $B$.  Thus all of the points in $B$ have order $j$ under $h$.   Finally, it follows from Lemma \ref{topfact} that every point of $S^3-(A \cup B)$ has order $r$ under $h$, and hence condition (3) holds.

Observe that $h^{\lcm(k,j)}$ fixes $A\cup B$ and hence must be the identity.  Thus $r$ divides $\lcm(k,j)$.  But we also know that $k$ and $j$ both divide $r$ since $\fix(h^k)$ and $\fix(h^j)$ are non-empty.  So $\lcm(k,j)$ divides $r$.  It follows that condition (4) holds.\end{proof}
\medskip

Now we consider complete bipartite graphs.  Throughout the paper, we will use $V$ and $W$ to denote the vertex sets of a complete bipartite graph $K_{n,m}$. We begin with the following well known result about automorphisms of complete bipartite graphs.

\medskip

\begin{fact}
\label{auts}
Let $\f $ be a permutation of the vertices of $K_{n,m}$.  Then $\f$ is an automorphism of $K_{n,m}$ if and only if $\f$ either interchanges $V$ and $W$ or setwise fixes each of $V$ and $W$.
\end{fact}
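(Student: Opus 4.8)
The plan is to prove both implications directly from the definition of $K_{n,m}$, using the single structural fact that in $K_{n,m}$ the neighborhood of every vertex of $V$ is exactly $W$ and the neighborhood of every vertex of $W$ is exactly $V$.

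For the ``if'' direction, suppose $\varphi$ setwise fixes $V$ and $W$. The edges of $K_{n,m}$ are precisely the pairs $\{v,w\}$ with $v\in V$ and $w\in W$, and the non-edges are precisely the pairs lying inside $V$ or inside $W$. Since $\varphi$ restricts to bijections $V\to V$ and $W\to W$, it carries each cross-pair to a cross-pair and each intra-part pair to an intra-part pair; hence $\varphi$ preserves both edges and non-edges and so is an automorphism. The case where $\varphi$ interchanges $V$ and $W$ is identical after swapping the roles of the two vertex sets.

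For the ``only if'' direction, suppose $\varphi$ is an automorphism. Fix a single vertex $v\in V$; then $\varphi(v)$ lies in $V$ or in $W$. Since $\varphi$ is an automorphism we have $\varphi(N(v))=N(\varphi(v))$, and $N(v)=W$. If $\varphi(v)\in V$ then $N(\varphi(v))=W$, so $\varphi(W)=W$, and since $\varphi$ permutes $V\cup W$ this forces $\varphi(V)=V$. If instead $\varphi(v)\in W$ then $N(\varphi(v))=V$, so $\varphi(W)=V$, and bijectivity again gives $\varphi(V)=W$. In the first case $\varphi$ setwise fixes each of $V$ and $W$, and in the second it interchanges them.

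There is no substantial obstacle here: the statement is a routine verification. The only point worth being slightly careful about is organizing the argument around the image of a \emph{single} vertex of $V$ and then invoking the fact that $\varphi$ is a bijection of $V\cup W$, rather than trying to argue vertex by vertex; this keeps both the case distinction and the use of $N(v)=W$ clean.
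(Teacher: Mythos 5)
Your proof is correct. The paper states this as a well-known Fact and gives no proof at all, so there is nothing to compare against; your argument --- checking the ``if'' direction by noting that edges are exactly the cross-pairs, and the ``only if'' direction by tracking the neighborhood of a single vertex $v\in V$ and invoking bijectivity --- is the standard verification and is complete.
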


\medskip
 We will use the above Lemmas to obtain necessary conditions on finite order homeomorphisms of complete bipartite graphs embedded in $S^3$.

\begin{lemma}
\label{S2badedges} 
Let $n, m>2$.  Suppose that $\Gamma$ is an embedding of $K_{n,m}$ in $S^3$ with a finite-order homeomorphism $h$ of $(S^3,\Gamma)$.  If $\fix(h)\cong S^2$, then $V$ or $W$ is entirely contained in $\fix(h)$. 
\end{lemma}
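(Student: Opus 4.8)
The plan is to exploit the fact that $\fix(h) \cong S^2$ separates $S^3$ into two balls, and to combine this with the connectivity of $K_{n,m}$ and Fact~\ref{auts}. First I would set $F = \fix(h) \cong S^2$, and let $B_1$ and $B_2$ be the closures of the two components of $S^3 - F$, so $h$ either swaps $B_1$ and $B_2$ or fixes each of them setwise. Since $h$ is orientation reversing (by Smith Theory, a nontrivial finite order homeomorphism with $\fix(h)\cong S^2$ is orientation reversing), and an orientation reversing homeomorphism of $S^3$ cannot swap the two sides of a separating $2$-sphere while fixing it pointwise (that would force $h$ to be orientation preserving near $F$, or one can argue via the mapping class of the pair), I expect $h(B_i) = B_i$ for each $i$. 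Now every vertex of $\Gamma$ lies in $B_1$, in $B_2$, or on $F$; likewise every edge, being connected, either meets $F$ or lies entirely in one open ball.

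Next I would argue that $V$ and $W$ cannot be "split" across the two sides. Suppose some vertex $v \in V$ lies in the interior of $B_1$ and some vertex $v' \in V$ lies in the interior of $B_2$. Pick any $w \in W$; the edge $vw$ joins a point in $\INT(B_1)$ to $w$, and the edge $v'w$ joins a point in $\INT(B_2)$ to $w$, so $w$ must lie on $F$ (an edge from $\INT(B_1)$ to $\INT(B_2)$ must cross $F$, but an embedded edge meeting $F$ in an interior point of the edge is fine — the cleaner statement is that the endpoint $w$ is forced onto $F$ only if \emph{both} its neighbors on opposite sides... ) — more carefully: the edge $vw$ has its interior disjoint from the vertex $w$'s position only issue is whether $w\in F$. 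Actually the right dichotomy is: since $h$ fixes $F$ pointwise and preserves each $B_i$, and since $K_{n,m}$ is connected, I would show that all of $V\cup W$ that does \emph{not} lie on $F$ lies on a single side. Indeed if $u$ is on one side and $u'$ on the other, then because the graph is connected there is a path from $u$ to $u'$, and this path must pass through a vertex on $F$; but in a bipartite graph a vertex on $F$ being fixed (since $h|_F = \mathrm{id}$) is permitted, so I instead track which \emph{part} ($V$ or $W$) can have vertices off $F$: a vertex $x\notin F$ has \emph{all} of its neighbors either on $F$ or on the same side as $x$ (an edge from $\INT(B_1)$ to $\INT(B_2)$ is impossible since its interior would have to cross $F$, contradicting that the edge is an embedded arc whose interior we may take — after an isotopy is not available here, but the arc literally crosses $F$ transversally somewhere, and $F$ separates, so no arc has one endpoint strictly inside $B_1$ and the other strictly inside $B_2$ without meeting $F$; if it meets $F$ at an interior point, that point is fixed, which is allowed). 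The honest argument: an embedded edge with one endpoint in $\INT(B_1)$ and the other in $\INT(B_2)$ must intersect $F$; that is fine for the edge but then $h$ maps that edge to another edge with the same endpoints, forcing a $2$-cycle on edges, which is fine — so this route needs the graph structure.

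Let me restructure: the cleanest path is to show $V\subseteq F$ or $W\subseteq F$ by assuming neither holds and deriving a contradiction with planarity/separation. Suppose $v\in V\setminus F$ and $w\in W\setminus F$. If $v$ and $w$ are on the same side, say $\INT(B_1)$, consider a vertex $v'\in V$; if any such $v'$ is in $\INT(B_2)$ or on $F$... I would instead count: $v$ has neighbors all of $W$; since $v\in\INT(B_1)$, each edge $vw'$ starts in $\INT(B_1)$, so each $w'\in W$ is either in $B_1$ (possibly on $F$) — no $w'$ is in $\INT(B_2)$. Symmetrically, since some $w\in W\setminus F\subseteq \INT(B_1)$ (we just showed $W\subseteq B_1$, and $w\notin F$ so $w\in\INT(B_1)$), each $v'\in V$ is in $B_1$. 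So all of $V\cup W\subseteq B_1$. Then $\Gamma\subseteq B_1$, a ball, and $h$ fixes $F=\partial B_1$ pointwise while permuting the graph in the interior; this is not yet a contradiction. The real contradiction must come from $h$ having order $>1$: the quotient $B_1/h$ — hmm.

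The genuine key step, and the main obstacle, is the following: I would invoke that $h$ restricted to one of the balls $B_i$ is a finite order homeomorphism of $B^3$ fixing the boundary $S^2$ pointwise, hence (by Smith theory applied in $B^3$, or by the Orbifold Theorem / uniqueness of the trivial action, since an orientation-reversing finite order map can't fix a whole boundary sphere) $h|_{B_i}$ has $\fix = S^2 = \partial B_i$, and then $h$ acting on the other ball $B_{3-i}$ is orientation \emph{preserving} there with $\fix \supseteq \partial B_{3-i} = S^2$, forcing $h|_{B_{3-i}} = \mathrm{id}$, hence $h = \mathrm{id}$, a contradiction — \emph{unless} $h$ swaps $B_1 \leftrightarrow B_2$. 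So in fact $h$ swaps the two sides, and $F$ is exactly the fixed set; then I reflect across $F$: pairing $B_1$ with $B_2$ via $h$, the graph in $\INT(B_1)$ is carried to the graph in $\INT(B_2)$, so vertices off $F$ come in pairs, one on each side. Now if $v\in V\cap\INT(B_1)$ then $h(v)\in V$ (as $h$ preserves or swaps $V,W$; if it swaps them, relabel) lies in $\INT(B_2)$, and for any $w\in W$, the edge $vw$ and the edge $h(v)w = $ edge from $\INT(B_2)$ to $w$ — these two edges share endpoint $w$, and one goes into $\INT(B_1)$, the other into $\INT(B_2)$, which forces $w\in F$. Hence \emph{every} vertex of $W$ lies on $F$, i.e. $W\subseteq \fix(h)$. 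If $h$ interchanges $V$ and $W$ the same argument (starting from a vertex of $V$ not on $F$, whose image is in $W$ and on the other side) again forces one whole part onto $F$. That completes the proof; the subtle point I'd flag is justifying "$h$ fixing $\partial B^3$ pointwise and of finite order must be the identity on $B^3$" (needed to rule out the side-preserving case), for which I would cite Thurston's Orbifold Theorem \cite{BLP} exactly as in Lemma~\ref{topfact} — the resulting isometry of $S^3$ fixing a $2$-sphere pointwise with finite order is a reflection, which is fixed-point free nowhere except the sphere and visibly does not fix a ball pointwise, giving the contradiction.
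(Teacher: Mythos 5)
There is a genuine gap, concentrated in the case $\f(V)=W$. First, a smaller issue: the step you actually need throughout — an edge whose interior meets $\fix(h)$ contains a point fixed by $h$, hence is mapped to \emph{itself} (distinct edges of an embedded graph meet only at vertices), hence $h$ either fixes or interchanges its two endpoints — surfaces only in passing during your exploration ("forcing a $2$-cycle on edges, which is fine") and is never invoked where you assert that the edges $\overline{vw}$ and $\overline{h(v)w}$ running into opposite balls "force $w\in F$." That assertion is correct when $h(V)=V$ (the invariance of $\overline{h(v)w}$ gives $h(w)=w$, so $w\in\fix(h)$), but it needs this justification spelled out; as written it reads as if an edge simply cannot cross $F$, which you yourself noted is not true.

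The serious problem is your last sentence. If $h$ interchanges $V$ and $W$, then $h$ fixes \emph{no} vertex at all (a fixed vertex would lie in $V\cap W$), so it is impossible for "one whole part" to be forced onto $F$; the correct conclusion is that this case cannot occur, and proving that requires the hypothesis $n,m>2$, which your argument never uses. Indeed the lemma is false for $K_{2,2}$: take $h$ a reflection through a sphere $S$, put $v_1,w_2$ in one complementary ball and $w_1=h(v_1)$, $v_2=h(w_2)$ in the other; the edges $\overline{v_1w_1}$, $\overline{v_2w_2}$ can be taken invariant and the other two edges embedded off $S$, so $\fix(h)\cong S^2$ contains neither part. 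So any proof omitting $n,m>2$ cannot close. The paper handles this by running the edge-invariance observation first (a single edge from $v\in A$ to $w\in B$ forces $h(v)=w$, hence $h(V)=W$ and no fixed vertices), and then, since $n>2$, pigeonholing two vertices $v_1,v_2$ of $V$ into the same ball: the edge $\overline{v_1h(v_2)}$ then crosses $F$ and forces $h(v_1)=h(v_2)$, contradicting injectivity. You need this (or an equivalent) argument to finish.
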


\begin{proof}    Since $\fix(h)\cong S^2$, it follows from Smith Theory that $h$ is orientation reversing, and hence interchanges the components
$A$ and $B$ of $S^3-\fix(h)$.  Suppose for the sake of contradiction that neither $V$ nor $W$ is entirely contained in $\fix(h)$.  

 Since neither $V$ nor $W$ is contained in $\fix(h)$ and $A$ and $B$ are interchanged by $h$, there are vertices $v$ of $V$ and $w$ of $W$ such that $v\in A$ and $w\in B$.  Now the edge $ \overline{vw}$ intersects $\fix(h)$, and hence $h$ fixes a point of $\overline{vw}$.  Thus $h$ interchanges $w$ and $v$.  From the above Fact we know that $h(V)=V$ or $h(V)=W$.  Hence we must have $h(V) = W$.  It follows that no vertices of the graph are fixed.  Since $n >2$, without loss of generality, there is a pair of distinct vertices $v_1$ and $v_2$ of $V$ which are contained in $A$.  Then some $h(v_2)\in B$ and $h(v_2)$ is in $W$.  However, the edge $\overline{v_1h(v_2)}$ intersects $\fix(h)$ and hence, $h(v_1) = h(v_2)$.  But this is impossible since $h$ is a bijection.  Thus $\fix(h)$ contains one of the vertex sets.
\end{proof}
\medskip

\begin{3lemma}
\label{3fv}
Let $m, n>2$.  Suppose that $\Gamma$ is an embedding of $K_{n,m}$ in $S^3$ with an order $r$ homeomorphism $h$ of $(S^3,\Gamma)$ which fixes at least one vertex if $h$ is orientation preserving and at least 3 vertices if $h$ is orientation reversing.  Then all non-fixed vertices are in $r$-cycles and one of the following holds (up to interchanging $V$ and $W$):
\begin{enumerate}

\item If $h$ is orientation preserving, then either $h$ fixes no vertices of $W$, or $h$ fixes at most 2 vertices of each of $V$ and $W$.

\item If $h$ is orientation reversing, then $h$ has order 2, and $h$ fixes all vertices of $V$ and at most 2 vertices of $W$.
	
\end{enumerate}
\end{3lemma}

\begin{proof}
First suppose that $h$ is orientation preserving.  If $h$ fixes at least 3 vertices of $V$, then $h$ cannot fix any vertices of $W$, since otherwise $\fix(h)\cong S^1$ would contain a $K_{3,1}$ graph.  Thus condition (1) is satisfied.  

Next suppose that $h$ is orientation reversing.  Since at least 3 vertices are fixed by $h$, by Smith Theory $\fix(h)\cong S^2$.  Hence $h$ has order 2.  Also, by Lemma \ref{S2badedges}, without loss of generality $\fix(h)$ contains all of $V$, which includes at least 3 vertices.  If $\fix(h)$ also contained more than 2 vertices of $W$, then $\fix(h)\cong S^2$ would contain the nonplanar graph $K_{3,3}$.  Thus condition (2) is satisfied.

In either case, by Smith Theory all non-fixed vertices are in $r$-cycles. \end{proof}
\medskip

\begin{12lemma}
\label{2fv}
Let $m, n>2$.  Suppose that $\Gamma$ is an embedding of $K_{n,m}$ in $S^3$ with an order $r$ orientation reversing homeomorphism $h$ of $(S^3,\Gamma)$ which fixes at most 2 vertices.  Then the fixed vertices are in a single vertex set $V$, and one of the following holds, with all remaining non-fixed vertices in $r$-cycles:\begin{enumerate}

	\item There are no fixed vertices or exceptional cycles, and $h(V) = V$.
	
	\item $W$ contains one 2-cycle.

	\item  $V$ contains some 2-cycles.
	
	 \item $V\cup W$ contains at most two 2-cycles, $\frac{r}{2}$ is even, and $h(V) = W$.
	
	\item  $V$ may contain some 2-cycles, $\frac{r}{2}$ is odd, and all vertices of $W$ are in $\frac{r}{2}$-cycles.
	
	\item $W$ contains at most one 2-cycle, $\frac{r}{2}$ is odd, and all non-fixed vertices of $V$ are in $\frac{r}{2}$-cycles.
	
\end{enumerate}
\end{12lemma}

\begin{proof}  Observe that since $h$ is orientation reversing, $r$ must be even.  We will observe that if $\frac{r}{2}$ is odd, then $h(V) = V$.  Towards contradiction, suppose $\frac{r}{2}$ is odd, and $h(V) = W$.  Then $h^{\frac{r}{2}}$ is also orientation-reversing, interchanges $V$ and $W$ and divides $V \cup W$ into 2-cycles $(v_iw_i)$.  But then $h^{\frac{r}{2}}$ must fix a point on each edge $\overline{v_iw_i}$; since $m, n > 2$, this means $\fix(h^{\frac{r}{2}}) = S^2$.  But, by Lemma \ref{S2badedges}, this contradicts the fact that $h^{\frac{r}{2}}$ does not fix any vertices.  Hence $h(V) = V$.  In particular, if $r = 2$ then $h(V) = V$, and conditions (1) and/or (3) follows trivially.

So we assume that $r>2$.  Since $n,m \geq 3$ and no more than 2 vertices are fixed by $h$, both $V$ and $W$ must contain non-fixed vertices.  Thus by Lemma \ref{S2badedges}, $\fix(h)$ cannot be homeomorphic to $S^2$.  Now it follows from Smith Theory that $h$ has precisely 2 fixed points.  Thus $h$ cannot fix one vertex from each vertex set, since that would force $h$ to fix every point on the edge between the fixed vertices.  So without loss of generality, all fixed vertices are in $V$.  

Since $h^2$ is orientation preserving and has non-empty fixed point set, $\fix(h^2)\cong S^1$.  By applying Smith Theory to $h^2$ we see that all points not fixed by $h^2$ are in $\frac{r}{2}$-cycles under $h^2$, and hence are either in $\frac{r}{2}$-cycles or $r$-cycles under $h$. So all non-fixed vertices of $\Gamma$ are in 2-cycles, $\frac{r}{2}$-cycles, or $r$-cycles under $h$.  

Suppose at least one vertex of $V$ is fixed and some vertex $w\in W$ is in a 2-cycle.  Then $h^2$ fixes these 2 vertices of $W$ together with any vertices of $V$ which are fixed by $h$.  If $W$ contained a second 2-cycle then $\fix(h^2)$ would contain a $K_{1,4}$ graph, and if $V$ contained a 2-cycle then $\fix(h^2)$ would contain a $K_{3,2}$ graph.  Both cases are impossible since $\fix(h^2)\cong S^1$.  Hence the 2-cycle in $W$ is the only 2-cycle in the graph.  So, if there is a fixed vertex and no vertices of $\Gamma$ are in $\frac{r}{2}$-cycles (or $r = 4$), then either condition (2) or (3) is satisfied depending on whether or not $W$ contains a 2-cycle.

Suppose there are no fixed vertices in $\Gamma$.  Then the vertex sets $V$ and $W$ are interchangeable.  As above, if $\fix(h^2)$ contains more than 2 vertices of $V$, then it cannot contain any vertices of $W$ since $\fix(h^2)\cong S^1$.  Thus either there are at most two 2-cycles in $V\cup W$ and these are the only 2-cycles in $\Gamma$, or all of the 2-cycles of $\Gamma$ are in $V$.   If there are 2-cycles $(v_1v_2)$ and $(w_1w_2)$ then $\fix(h^2)$ must be exactly these four vertices and the edges between them.  But this set contains the two points of $\fix(h)$; since $h$ does not fix a vertex it must fix a point on an edge, and hence must interchange the endpoints of this edge.  But then $h(V) = W$, which contradicts the assumption that $h(v_1) = v_2$.

So, if there are 2-cycles involving both $V$ and $W$, we must have that $h(V) = W$, and hence $\frac{r}{2}$ must be even.  Thus either there are at most two 2-cycles in $V\cup W$, $\frac{r}{2}$ is even and $h(V) = W$, or all of the 2-cycles of $\Gamma$ are in $V$ and $h(V) = V$.  In particular, if $r=4$ or no vertices of $\Gamma$ are in $\frac{r}{2}$-cycles, then one of conditions (1), (3) or (4) is satisfied.

So we can assume that some vertex of $\Gamma$ is in an $\frac{r}{2}$-cycle and $r\not =4$.  Then the vertices in the $\frac{r}{2}$-cycle are not fixed by $h^2$.  But if $\frac{r}{2}$ is even, then $\fix(h^2) \subseteq \fix(h^{\frac{r}{2}}) = S^1$, so the fixed point sets are the same.  Hence $\frac{r}{2}$ must be odd and
$r\geq 6$.   Thus $h^{\frac{r}{2}}$ fixes at least 3 vertices from a $\frac{r}{2}$-cycle.  Since $h^{\frac{r}{2}}$ is orientation reversing, this implies that $\fix(h^{\frac{r}{2}})\cong S^2$.  Now by Lemma \ref{S2badedges}, $\fix(h^{\frac{r}{2}})$ contains all of the vertices in one of the vertex sets.  Let $X$ 
denote the vertex set contained in $\fix(h^{\frac{r}{2}})$ and let $Y$ denote the other vertex set.  Hence all of the non-fixed vertices of $X$ are in $\frac{r}{2}$-cycles.  Suppose that some vertex $y\in Y$ is also in a $\frac{r}{2}$-cycle.  Then $y$, $h(y)$, and $h^2(y)$ are distinct vertices in $\fix(h^{\frac{r}{2}})$.  Since $X$ contains at least 3 vertices, this would imply that $\fix(h^{\frac{r}{2}})$ contains a $K_{3,3}$ graph.  As this is impossible, no vertex in $Y$ is in an $\frac{r}{2}$-cycle.  In particular, $V$ and $W$ cannot both contain $\frac{r}{2}$-cycles.  Now if $W$ is contained in $\fix(h^{\frac{r}{2}})$ then condition (5) is satisfied.  If there are no fixed vertices, then $V$ and $W$ are interchangeable, and hence again condition (5) is satisfied.  If there is at least one fixed vertex and $V$ is contained in $\fix(h^{\frac{r}{2}})$, then condition (6) is satisfied since in this case $W$ contains at most one 2-cycle.  \end{proof}

\medskip

The situation is somewhat more complicated if $h$ is orientation preserving and fixes no vertices of $\Gamma$.  In this case, by Lemma \ref{topfact} one or two distinct circles (but no more) could be fixed by different powers of $h$.  This gives many possibilities depending on whether these circles contain vertices from the sets $V$ and/or $W$.

\begin{0lemma}
\label{0fv}
Let $m, n>2$.  Suppose that $\Gamma$ is an embedding of $K_{n,m}$ in $S^3$ with an order $r$ orientation preserving homeomorphism $h$ of $(S^3,\Gamma)$ which fixes no vertices.   If some power of $h$ less than then $r$ fixes a vertex, then one of the following holds (up to interchanging $V$ and $W$), with all remaining vertices in $r$-cycles:
\begin{enumerate}

\item If there is precisely one circle $\fix(h^j)$ containing a vertex  and $j<r$ is minimal, then one of the following holds.

\begin{enumerate}
\item $V$ may contain some $j$-cycles.

\item $\fix(h^j)$ contains vertices $v_1, v_2 \in V$ and $w_1, w_2 \in W$ such that either $j=2$ and $h$ induces $(v_1v_2)(w_1w_2)$ or $j=4$ and $h$ induces $(v_1w_1v_2w_2)$.

\end{enumerate}

\item If there are two circles $\fix(h^j)$ and $\fix(h^k)$ containing vertices and $j, k<r$ are minimal, then one of the following holds.

\begin{enumerate}
\item  $V$ contains some $j$-cycles and $k$-cycles.

\item $V$ contains some $j$-cycles and $W$ contains some $k$-cycles.

\item $j=2$, $\fix(h^2)$ contains $v_1, v_2 \in V$ and $w_1, w_2 \in W$ such that $h$ induces $(v_1v_2)(w_1w_2)$, and $V$ contains some $\frac{r}{2}$-cycles with $\frac{r}{2}$ odd.

\end{enumerate}
\end{enumerate}
\end{0lemma}

\begin{proof}
By our hypotheses, some power of $h$ less than $r$ fixes a vertex, though $h$ itself fixes no vertices.  Since $h$ is orientation preserving it follows from Smith Theory that $h$ is fixed point free.  

First suppose that there is a circle $A=\fix(h^j)$ which contains $v_1\in V$ and $w_1\in W$ and $j$ is minimal.  Since $h$ is fixed point free and $h(A)=A$, we know that $h$ rotates the circle $A$.  Thus $A$ contains the same number of vertices from each of the vertex sets $V$ and $W$, and since $A$ must also contain the edges between vertices this number must be at least 2 (or $h$ would not send edges to edges).  Thus $A$ must consist of vertices $v_1$, $w_1$, $v_2$, $w_2$ together with the edges between them, and $j = 2$ or $4$.  If no vertex is fixed by any other power of $h$ less than $r$, then condition (1b) is satisfied.  Suppose that some vertex not on $A$ is fixed by $h^k$ with $k$ minimal such that $j < k <r$.  Since $h^k$ does not fix any point of $A$, $j$ cannot divide $k$.  In particular, $B=\fix(h^k)$ cannot contain vertices from both $V$ and $W$ (since then $k = 2$ or $4$ by the same argument, and $j$ would divide $k$).  Thus without loss of generality, $B$ only contains vertices of $V$.  Since $h(B)=B$, we must have $h(V)=V$.  Thus $h$ must induce $(v_1v_2)(w_1w_2)$. Now it follows from Smith Theory that $k$ is odd and from Lemma \ref{emptyorder} that $k=\frac{r}{2}$.  Hence condition (2c) is satisfied.

Thus we assume that no power of $h$ less than $r$ simultaneously fixes vertices from each of $V$ and $W$.  Now by Lemmas \ref{topfact} and \ref{emptyorder}, it is easy to check that one of conditions (1a), (2a), or (2b) is satisfied.\end{proof}

\medskip



\section{Realizing Automorphisms of Complete Bipartite Graphs}

We will provide constructions to show each case of the Classification Theorem is possible.  Our constructions proceed by first embedding the vertices of $K_{n,m}$ so that there is an isometry of $S^3$ that acts on them as desired, and then embedding the edges using the following Edge Embedding Lemma, so that the resulting embedding of $K_{n,m}$ is setwise invariant under $g$, and $g$ induces $\f$ on $K_{n,m}$.  Recall that a graph $H$ is a {\em subdivision} of a graph $G$ if $H$ is the result of adding distinct vertices to the interiors of the edges of $G$.

\begin{eel}\label{eelemma}
\cite{Flapan:2010}
Let $V$ and $W$ denote the vertex sets of $K_{n,m}$ and let $\g$ be a subdivision of $K_{n, m}$ constructed by adding vertices $Z$.  Assume the vertices of $\g$ are embedded in $S^3$ so that an isometry $g$ of $S^3$ of order $r$ induces a faithful action on $\g$.  Let $Y$ denote the union of the fixed point sets of all the $g^i$, for $i < r$.  Suppose the following hypotheses hold for adjacent pairs of vertices in $V \cup W \cup Z$:
\begin{enumerate}

\item
If a pair is pointwise fixed by $g^i$ and $g^j$ for some $i,j<r$, then $\fix(g^i)=\fix(g^j)$.

\item No pair is interchanged by any $g^i$.

\item
Any pair that is pointwise fixed by some $g^i$ with $i<r$ bounds an arc in $\fix(g^i)$ whose interior is disjoint from $V\cup W \cup Z \cup (Y - \fix(g^i))$.

\item
Every pair is contained in the closure of a single component of $S^3 - Y$

\end{enumerate}

Then there is an embedding of the edges of $\g$ in $S^3$ such that the resulting embedding of $\g$ is setwise invariant under $g$.  
\end{eel}

Note that if $g$ is an isometry, then $Y$ only separates $S^3$ if $\fix(g^i) = S^2$ for some $i$.  So, by Lemma \ref{S2badedges}, condition (4) of the Edge Embedding Lemma is equivalent to saying that if $\fix(g^i)=S^2$ then either $V\subseteq \fix(g^i)$ or $W\subseteq \fix(g^i)$.
\medskip


We will first consider automorphisms $\f$ of $K_{n,m}$ which can be realized by {\it orientation-preserving} isometries of $S^3$.  The roles of $V$ and $W$ can be reversed in all the following lemmas.  Notice that the conditions of each lemma are numbered to match the corresponding conditions in the Classification Theorem.  We first show that the first three cases of the Classification Theorem are realizable by rotations in $S^3$, as long as the automorphism fixes $V$ and $W$ setwise.


\begin{rotation}\label{r-cycles}
Let $m, n>2$ and let $\f$ be an order $r$ automorphism of $K_{n,m}$ with vertex sets $V$ and $W$. Suppose that $\f(V) = V$ and all vertices are in $r$-cycles except for the fixed vertices and exceptional cycles explicitly mentioned below (up to interchanging $V$ and $W$): 
\begin{enumerate} 
\item There are no fixed vertices or exceptional cycles.

\item $V$ contains one or more fixed vertices.

\item $V$ and $W$ each contain at most 2 fixed vertices.
\end{enumerate}

\noindent Then $\f$ is realizable by a rotation of $S^3$ of order $r$.

\end{rotation}

\begin{proof}
Let $g$ be a rotation of $S^3$ by $\frac{2\pi}{r}$ around a circle $X$.  Then the order of $g$ is $r$.  Embed all the fixed points of $\f$ on $X$.  In the case where $V$ and $W$ each contain 2 fixed vertices, alternate vertices from $V$ and $W$ around $X$.  Embed the remaining vertices in pairwise disjoint $r$-cycles of $g$ in $S^3-X$.  Thus $g$ induces $\f$ on $V\cup W$.

We now check that the conditions of the Edge Embedding Lemma are satisfied.  Since $g(V)=V$ and $g(W)=W$, condition (2) is satisfied.  Notice that for every $i<r$ we have that $\fix(g^i)=X$, and $X$ does not separate $S^3$, so conditions $(1)$ and $(4)$ are satisfied.  Furthermore, if $\f$ fixes either $1$ or $2$ vertices in each of $V$ and $W$, there is a collection of arcs in $X$ from each of the fixed points of $V$ to each of the fixed points of $W$ such that the interior of each arc is disjoint from $V\cup W$.  Hence condition $(3)$ is met.  Thus by the Edge Embedding Lemma, there is an embedding of $K_{n,m}$ such that $\f$ is realized by $g$.  
\end{proof}

\bigskip

The next 6 cases of the Classification Theorem are realizable by glide rotations in $S^3$, along with the automorphisms in case (1) that interchange the vertex sets $V$ and $W$.  Recall that a {\em glide rotation} is the composition of two rotations about linked circles such that each rotation fixes the axis of the other rotation setwise.

\begin{glide}\label{kjr-cycles}
Let $m, n>2$ and let $\f$ be an order $r$ automorphism of $K_{n,m}$ with vertex sets $V$ and $W$. Suppose that there are no fixed vertices, and all vertices are in $r$-cycles except for the exceptional cycles explicitly mentioned below (up to interchanging $V$ and $W$): 
\begin{enumerate}

	\item $\f(V) = W$ and there are no exceptional cycles.
	
\setcounter{enumi}{3}

	\item $j|r$ and $V$ contains some $j$-cycles.

	\item $r=\mathrm{lcm}(j,k)$, and $V$ contains some $j$-cycles and $k$-cycles.

	\item $r=\mathrm{lcm}(j,k)$, and $V$ contains some $j$-cycles and $W$ contains some $k$-cycles.

	\item $V$ and $W$ each contain one 2-cycle.

	\item $\frac{r}{2}$ is odd, $V$ and $W$ each contain one 2-cycle, and $V$ contains some $\frac{r}{2}$-cycles.

	\item $\f(V)=W$ and $V\cup W$ contains one 4-cycle.

\end{enumerate}

\noindent Then $\f$ is realizable by a glide rotation of $S^3$ of order $r$.
\end{glide}

\begin{proof}
Let $X$ and $Y$ be geodesic circles in $S^3$ which are the intersections of $S^3$ with perpendicular planes through the origin in $\R^4$.  We define a glide rotation $g$ in each case as follows (we split case (1) into two parts, depending on whether $\frac{r}{2}$ is odd or even; since $\f(V) = W$, we know $r$ must be even):

\begin{enumerate}
	\item[(1a)] ($\frac{r}{2}$ is odd) $g$ is the composition of a rotation by $\frac{4\pi}{r}$ around $X$ and a rotation by $\frac{2\pi}{r}$ around $Y$.
	\item[(1b)] ($\frac{r}{2}$ is even) $g$ is the composition of a rotation by $\frac{\pi}{2}$ around $X$ and a rotation by $\frac{2\pi}{r}$ around $Y$.
\setcounter{enumi}{3}
	\item $g$ is the composition of a rotation by $\frac{2\pi}{j}$ around $X$ and a rotation by $\frac{2\pi}{r}$ around $Y$.
	\item $g$ is the composition of a rotation by $\frac{2\pi}{j}$ around $X$ and a rotation by $\frac{2\pi}{k}$ around $Y$.
	\item same as case (5).
	\item $g$ is the composition of a rotation by $\pi$ around $X$ and a rotation by $\frac{2\pi}{r}$ around $Y$.
	\item $g$ is the composition of a rotation by $\pi$ around $X$ and a rotation by $\frac{4\pi}{r}$ around $Y$.
	\item same as case (1b).
\end{enumerate}

Observe that in case (7) $r$ is even since there is a 2-cycle.  In case (8) $\frac{r}{2}$ is odd by assumption, and in case (9) $r$ is divisible by 4 since there is a 4-cycle (so $\frac{r}{2}$ is even).  So in each case, the order of $g$ is $r$.

We begin by embedding the vertices of $K_{n,m}$ in each case.  In cases (4), (5) and (6), $g\vert Y$ has order $j$, and we embed all of the $j$-cycles of $V$ under $\f$ in $Y$ as pairwise disjoint $j$-cycles of $g$.  In cases (7) and (8), $g\vert Y$ has order $2$, andwe embed the $2$-cycles of $V$ and $W$ in $Y$ as alternating pairwise disjoint $2$-cycles of $g$.  In case (9), $g\vert Y$ has order $4$, and we embed the 4-cycle in $Y$ with $v$'s and $w$'s alternating.  In cases (5) and (6), $g \vert X$ has order $k$, and we similarly embed all of the $k$-cycles of $V$ or $W$ under $\f$ in $X$.  In case (8), $g \vert X$ has order $\frac{r}{2}$, and we embed all of the $\frac{r}{2}$-cycles of $V$ under $\f$ in $X$.  Finally, in all the cases we embed the $r$-cycles of $V$ and $W$ in $S^3 - (X \cup Y)$.  Thus $g$ induces $\f$ on $V\cup W$.

\medskip

We will now use the Edge Embedding Lemma to embed the edges of $K_{n,m}$.  We will first consider case (1a), which is a bit more complex than the others.  In this case we need to first embed the midpoints of all the edges that are inverted by any $g^i$ for $i<r$.  Since all the cycles are length $r$, this is only possible if $i=\frac{r}{2}$.  Furthermore, since $\frac{r}{2}$ is odd, $g^\frac{r}{2}$ inverts $n$ edges of $K_{n,n}$ (since $\f(V) = W$, we have $n=m$ in this case).  For each edge $\overline{vw}$ which is inverted by $\f ^\frac{r}{2}$ we add a vertex $z_{vw}$ at the midpoint of the edge of the abstract graph $K_{n,n}$.  Denote this set of $n$ vertices by $Z$.  Observe that the vertices of $Z$ are in $\frac{r}{2}$-cycles under $\f$.  Then we define $H=K_{n,n}\cup Z$, so $H$ is a subdivision of $K_{n,n}$.  Now $g\vert Y$ has order $\frac{r}{2}$.  We embed the $\frac{r}{2}$-cycles of vertices $z_{vw}$ under $\f$ as a $\frac{r}{2}$-cycle of $g$ in $Y$.  Thus $g$ induces $\f$ on $V\cup W\cup Z$. 

We now check that the conditions of the Edge Embedding Lemma are satisfied for $H$.  If any pair of adjacent vertices of $H$ is fixed by $g^i$, then $g^i$ must fix a vertex of $V \cup W$, but none of these vertices are embedded in $X \cup Y$.  So conditions (1) and (3) are trivially satisfied.  Since $\fix(g^i) \neq S^2$ for all $i$, condition (4) is satisfied.  For every pair of vertices $v\in V$ and $w\in W$ such that $g^\frac{r}{2}(v)=w$, there exists a vertex $z_{vw}\in Z$ such that $z_{vw}\in\fix(g^\frac{r}{2})=X$, so $v$ and $w$ are not adjacent in $H$.  Hence condition $(2)$ is satisfied.  Thus by the Edge Embedding Lemma, there is an embedding of $H$ that is setwise invariant under $g$.  Finally, we delete the embedded midpoint vertices of our embedding of $H$ to obtain an embedding of $K_{n,n}$ such that $\f$ is realized by $g$.

\medskip

We now check that the conditions of the Edge Embedding Lemma are satisfied for the other cases.  In cases (4)--(8) we have that $g(V)=V$ and $g(W)=W$, hence condition (2) is satisfied.  For cases (1b) and (9) we observe that $\f^i$ inverts an edge of $K_{n,n}$ if and only if $i$ is odd and $\f$ has a vertex cycle of length $2i$.  But the only vertex cycles in these case are length 4 or $r$, and $\frac{r}{2}$ is even.  So none of the edges are inverted, and condition (2) is again satisfied.  Since no $g^i$ with $i<r$ pointwise fixes $X\cup Y$, if two points are fixed by $g^i$ then they are either both in $X$ or both in $Y$.  The only cases when vertices of both $V$ and $W$ are embedded in the same circle is when both $V$ and $W$ have a single 2-cycle embedded in $Y$, or when a 4-cycle is embedded in $Y$.  If $g^i$ and $g^j$ both fix such a pair, for $i,j < r$, then $\fix(g^i) = \fix(g^j) = Y$, so condition (1) is satisfied.  Also, since the 4 vertices in each case alternate between $v$'s and $w$'s, condition (3) is satisfied.

Otherwise, vertices of $W$ and $V$ are not both embedded in the same circle.  Hence $g^i$ for $i<r$ does not simultaneously fix both a $v\in V$ and a $w\in W$, and conditions $(1)$ and $(3)$ follow trivially.  Since $\fix(g^i)\neq S^2$ for every $i<r$, condition $(4)$ is satisfied.  Thus by the Edge Embedding Lemma, there is an embedding of $K_{n,m}$ such that $\f$ is realized by $g$.
\end{proof}

\bigskip
We now consider the automorphisms that can be realized by {\em orientation-reversing} isometries of $S^3$.  Case (11) of the Classification Theorem can be realized by a reflection.

\begin{reflection}\label{order2}
Let $m, n>2$ and let $\f$ be an automorphism of $K_{n,m}$ of order $2$ that fixes all of the vertices of $V$ and at most $2$ vertices of $W$, with the remaining vertices of $W$ partitioned into $2$-cycles.  Then $\f$ is realizable by a reflection of $S^3$.
\end{reflection}

\begin{proof}
Let $g$ be a reflection of $S^3$ through a sphere $S$.  Embed all the fixed vertices of $\f$ on $S$, and embed the $2$-cycles of $\f$ in $S^3-S$ as pairwise disjoint $2$-cycles of $g$.  Thus $g$ induces $\f$ on $V\cup W$.

We now check that the conditions of the Edge Embedding Lemma are satisfied by the embedded vertices $V\cup W$ of $K_{n,m}$.  By construction we have that $g(V)=V$ and $g(W)=W$, hence condition (2) is satisfied.  Since the order of $g$ is $2$, condition $(1)$ of the Edge Embedding Lemma is met.  Since $K_{2,n}$ is a planar graph, there is a collection of disjoint arcs in $S$ connecting the vertices of $V$ to the (at most 2) fixed vertices of $W$ in $S$, hence condition $(3)$ is satisfied.  Furthermore, since $V\subseteq \fix(g)$, condition $(4)$ is satisfied.  Thus by the Edge Embedding Lemma, there is an embedding of $K_{n,m}$ such that $\f$ is realized by $g$.  \end{proof}
\bigskip

Cases (10), (12) and (13) of the Classification Theorem can be realized by improper rotations.  An {\em improper rotation} of $S^3$ is the composition of a reflection through a geodesic 2-sphere $S$ and a rotation about a geodesic circle $X$ that intersects $S$ perpendicularly in two points.

\begin{improper}\label{r/2V}
Let $m, n>2$ and let $\f$ be an order $r$ (where $r$ is even) automorphism of $K_{n,m}$ with vertex sets $V$ and $W$. Suppose that all vertices are in $r$-cycles except for the fixed vertices and exceptional cycles explicitly mentioned below (up to interchanging $V$ and $W$):

\begin{enumerate}\setcounter{enumi}{9}

\item $\f(V) = V$ and there are no fixed vertices or exceptional cycles.

\setcounter{enumi}{11}

\item  $\f(V) = V$, $V$ contains at most 2 fixed vertices, and one of the following is true.

\begin{enumerate}
\item $W$ contains one 2-cycle.

\item $V$ contains some 2-cycles.

\item  $V$ may contain some 2-cycles, $\frac{r}{2}$ is odd, and all vertices of $W$ are in $\frac{r}{2}$ cycles.

\item $W$ contains at most one 2-cycle, $\frac{r}{2}$ is odd, and all non-fixed vertices of $V$ are in $\frac{r}{2}$ cycles.
\end{enumerate}

\item $4 | r$, $\f(V)=W$ and $V\cup W$ contains at most two 2-cycles.

\end{enumerate}

\noindent Then $\f$ is realizable by an improper rotation of $S^3$ of order $r$.
\end{improper}

\begin{proof}
Let $S$ be a geodesic sphere in $S^3$ and $X$ be a geodesic circle which intersects $S$ perpendicularly at exactly two points.  In cases (10), (12a), (12b) and (13), we define an improper rotation $g$ as the composition of a reflection in $S$ with a rotation by $\frac{2\pi}{r}$ around $X$.  In cases (12c) and (12d) (where $\frac{r}{2}$ is odd), we define $g$ as the composition of a reflection in $S$ with a rotation by $\frac{4\pi}{r}$ around $X$.  In each case, $g$ has order $r$.  We will now embed the vertices of $K_{n,m}$.

\medskip

Let $F = \fix(g) = X \cap S$, so $\vert F \vert = 2$.  In each case, embed the fixed vertices of $\f$ (if any) as points of $F$.  If $r = 2$, then $\f(V) = V$ and we embed the remaining vertices as 2-cycles in $S^3 - (S \cup X)$; in this case, the conditions of the Edge Embedding Lemma are easily verified.  So we will assume $r > 2$.

Observe $g\vert X$ has order $2$.  In each case, we embed any 2-cycles under $\f$ in $X-F$ as pairwise disjoint $2$-cycles of $g$ (in cases (12a) and (12d), the embedded vertices of $W$ will alternate with any vertices of $V$ embedded in $F$); in case (13), we also alternate vertices from $V$ and $W$ around $X$.  Next observe that in cases (12c) and (12d) $g\vert S$ has order $\frac{r}{2}$.  In these cases, we embed the $\frac{r}{2}$-cycles under $\f$ in $S-F$ as pairwise disjoint $\frac{r}{2}$-cycles of $g$.  Finally, we embed the $r$-cycles under $\f$ as pairwise disjoint $r$-cycles of $g$ in $S^3-(S\cup X)$.  Thus $g$ induces $\f$ on $V\cup W$.

We now check that the conditions of the Edge Embedding Lemma are satisfied.  We first consider cases (10) and (12); case (13) is slightly more complex.  In cases (10) and (12) we have that $g(V)=V$ and $g(W)=W$, hence condition (2) is satisfied.  Since no $g^i$ with $i<r$ pointwise fixes $S\cup X$, if $2$ vertices are fixed by $g^i$ then they are both in $S$ or both in $X$.  In each of cases (10) and (12a)--(12d), all the vertices embedded in $X-F$ are from the same vertex set and all the vertices embedded in $S-F$ are from the same vertex set.  So we only need to consider the cases when there are vertices of $V$ embedded in $F$ and vertices of $W$ in either $X-F$ (cases (12a) and (12d)) or $S-F$ (case (12c)).  In cases (12a) and (12d), any $g^i$ (with $i < r$) that fixes an adjacent pair has fixed point set $X$, so condition (1) is satisfied; since there are at most two vertices of $V$ and $W$ embedded in $X$, alternating between $V$ and $W$, condition (3) is also satisfied.  In case (12c), any $g^i$ (with $i < r$) that fixes an adjacent pair has fixed point set $S$, so condition (1) is satisfied; since $K_{2,n}$ is planar there there is a collection of disjoint arcs connecting the vertices of $W$ in $S-F$ to the vertices of $V$ in $F$, so condition (3) is also satisfied.  The only cases when $\fix(g^i) = S$ are cases (12c) and (12d), when $i = \frac{r}{2}$.  In these cases, either $V \subset S$ or $W \subset S$, so condition 4 is satisfied.  Thus by the Edge Embedding Lemma, there is an embedding of $K_{n,m}$ such that $\f$ is realized by $g$.  

\medskip

Finally, we consider case (13).  In this case we need to first embed the midpoints of any edges that are inverted by any $g^i$ for $i<r$.  Since $\frac{r}{2}$ is even, $\f^{\frac{r}{2}}(V) = V$, so this will not happen for any of the vertices in the $r$-cycles; it remains to consider the 2-cycles.  For each 2-cycle $(vw)$ we add a vertex $z_{vw}$ at the midpoint of the edge $\overline{vw}$ in the abstract graph $K_{n,n}$ (since $\f(V) = W$, we have $n = m$).  Denote this set of vertices by $Z$; note that $\vert Z \vert \leq 2$.  Then we define $H=K_{n,n}\cup Z$, so $H$ is a subdivision of $K_{n,n}$.  The vertices of $Z$ are fixed by $\f$, so we embed them as points in $F$ (these points are available, since $\f$ doesn't fix any vertices of $V \cup W$).  Thus $g$ induces $\f$ on $V\cup W\cup Z$. 

We now check that the conditions of the Edge Embedding Lemma are satisfied for $H$.  If any pair of adjacent vertices of $H$ is fixed by $g^i$, then $g^i$ must fix $X$, so condition (1) is satisfied.  Since the vertices of $V$ and $W$ alternate around $X$ (with at most one vertex of $Z$ in between each pair), condition (3) is satisfied.  Since $\fix(g^i) \not S$ for all $i$, condition (4) is satisfied.  For every pair of vertices $v\in V$ and $w\in W$ such that $g(v)=w$, there exists a vertex $z_{vw}\in Z$ such that $z_{vw}\in\fix(g)=F$, so $v$ and $w$ are not adjacent in $H$.  Hence condition $(2)$ is satisfied.  Thus by the Edge Embedding Lemma, there is an embedding of $H$ that is setwise invariant under $g$.  Finally, we delete the embedded midpoint vertices of our embedding of $H$ to obtain an embedding of $K_{n,n}$ such that $\f$ is realized by $g$.
\end{proof}

\section{Conclusion}

\noindent{\sc Proof of the Classification Theorem.}  The proof follows immediately from our lemmas.  Let $\f$ be an automorphism.  If $\vert \fix(\f) \vert \geq 1$, then $\f$ is only realizable by an orientation-preserving homeomorphism if it falls into case (2) or (3), by the Fixed Vertices Lemma.  In these cases, $\f$ is realizable by a rotation.  If $\vert \fix(\f) \vert = 0$, but some power of $\f$ fixes a vertex, then $\f$ is only realizable by an orientation-preserving homeomorphism if it falls into one of cases (4)--(9), by the Orientation Preserving Lemma.  In these cases, $\f$ is realizable by a glide rotation.  If $\vert \fix(\f) \vert = 0$ and no power of $\f$ fixes a vertex, then $\f$ is realizable by either a rotation or a glide rotation, depending on whether $\f(V) = V$.

If $\vert \fix(\f) \vert \geq 3$, then $\f$ is only realizable by an orientation-reversing homeomorphism if it falls into case (11) by the Fixed Vertices Lemma.  In this case, $\f$ is realizable by a reflection.  If $1 \leq \vert \fix(\f) \vert \leq 2$, then $\f$ is only realizable by an orientation-reversing homeomorphism if it falls into case (12), by the Orientation Preserving Lemma.  In this case, $\f$ is realizable by an improper rotation.  If $\vert \fix(\f) \vert = 0$, then $\f$ is only realizable by an orientation-reversing homeomorphism if it falls into case (10), (12) or (13), by the Orientation Preserving Lemma.  In each of these cases, $\f$ is realizable by an improper rotation.  This accounts for all possibilities, and completes the proof.  $\Box$

\medskip

Now that we have determined which automorphisms of $K_{n,m}$ can be induced by a homeomorphism of an embedding in $S^3$, the next step is to explore which {\em groups} of automorphisms can be realized as the group of symmetries of an embedding in $S^3$.  It is known that for every finite subgroup $G$ of $\mathrm{Diff}_+(S^3)$, there is an embedding $\Gamma$ of some complete bipartite graph $K_{n,n}$ such that the group of all automorphisms of $\Gamma$ which are induced by orientation preserving homeomorphisms of $S^3$ is isomorphic to $G$ \cite{TSG1}.  However, this result does not tell us which groups are possible for embeddings of a {\em particular} bipartite graph.

\begin{question}
Given $n$ and $m$, which subgroups of the automorphism group of $K_{n,m}$ are induced by the orientation preserving homeomorphisms of the pair $(S^3, \Gamma)$ for some embedding $\Gamma$ of $K_{n,m}$?
\end{question}

The analogous question for complete graphs has been completely answered \cite{Flapan:2010}.  For bipartite graphs, the question has been studied for $K_{n,n}$ when the subgroup is isomorphic to $A_4$, $S_4$ or $A_5$ \cite{me} and when it is isomorphic to $\Z_n$, $D_n$ or $\Z_n \times \Z_m$ \cite{hmp}, but there is substantial work still to be done.

\small

\end{document}